\newcommand{\field}[1]{\mathbb{#1}}
\newcommand{\N}{\field{N}}                      
\newcommand{\R}{\field{R}}                      
\newcommand{\de}{\delta}
\newcommand{\ovdimB}{{\overline{\dim_B}}}
\newcommand{\Dcal}{\mathcal{D}}
\newcommand{\Ctil}{\widetilde{C}}
\def\Barint_#1{\mathchoice
          {\mathop{\vrule width 6pt height 3 pt depth -2.5pt
                  \kern -8pt \intop}\nolimits_{#1}}%
          {\mathop{\vrule width 5pt height 3 pt depth -2.6pt
                  \kern -6pt \intop}\nolimits_{#1}}%
          {\mathop{\vrule width 5pt height 3 pt depth -2.6pt
                  \kern -6pt \intop}\nolimits_{#1}}%
          {\mathop{\vrule width 5pt height 3 pt depth -2.6pt
                  \kern -6pt \intop}\nolimits_{#1}}}
\theoremstyle{definition}
\newtheorem{theorem}{Theorem}
\newtheorem{theoremA}{Theorem}
\theoremstyle{definition}
\newtheorem{remarks}[theorem]{Remarks}
\numberwithin{theorem}{section} \numberwithin{equation}{section}
\title{Dimensions and metric dyadic cubes}
\author[Efstathios-K. Chrontsios-Garitsis]{Efstathios-K. Chrontsios-Garitsis}
\address{Department of Mathematics \\ University of Tennessee, Knoxville \\ 1403 Circle Dr \\ Knoxville, TN 37966}
\email{echronts@utk.edu, echronts@gmail.com}
\subjclass[2020]{Primary 28A80; Secondary 30L99, 31E05}
\begin{document}

\begin{abstract}
In this note, we provide equivalent definitions for fractal geometric dimensions through dyadic cube constructions. Given a metric space $X$ with finite Assouad dimension, i.e., satisfying the doubling property, we show that the construction of systems of dyadic cubes by Hyt\"onen-Kairema is compatible with many dimensions. In particular, the Hausdorff, Minkowski, and Assouad dimensions can be equivalently expressed solely using  dyadic cubes in the aforementioned system. The same is true for the Assouad spectrum, a collection of dimensions introduced by Fraser-Yu.
\end{abstract}

\maketitle

\section{Introduction}
For the past three decades, fractals have gained significant attention within both pure and applied mathematics. The study of dimension notions is a fundamental part of fractal geometry, which facilitates the understanding of fractal objects. Two notions that have been popular since the early 20th century are the Hausdorff and the Minkowski dimension, which have found applications within many fields of research. For instance, these dimensions often show up in the context of partial differential equations (PDEs) \cite{MinkNavierStokesEq}, number theory \cite{dimH_Number_Th}, signal processing \cite{MinkSignal} and mathematical physics \cite{MinkPhysics}. We refer to the book of Falconer \cite{FalcBook} for a thorough exposition on these dimensions. Another notion of dimension that has been enjoying a lot of interest for the past decade is the Assouad dimension. Initially introduced under a different name by Assouad in \cite{Assouad83}, in order to investigate embeddability properties of abstract metric spaces, the Assouad dimension has been recently tied to a lot of areas through its fractal-geometric properties. Moreover, this notion has even motivated the introduction of dimension spectra, such as the Assouad spectrum introduced by Fraser-Yu \cite{fy:assouad-spectrum}, which interpolate between known dimensions (see \cite{FraserBook} for a modern exposition and applications). 

All three  notions of dimension have proved to be extremely useful, and are often studied simultaneously in situations where they differ. However, their original definitions can be quite complicated for the sake of calculations. The main idea behind the Hausdorff dimension $\dim_H E$ of a set $E\subset \R^n$ is that the dimension of the set is essentially the critical exponent $s\geq 0$ for which the $s$-dimensional Hausdorff measure of $E$, denoted by $H^s(E)$, changes from being infinite to being $0$. This is motivated by the property of the Lebesgue measure that, for instance, assigns infinite length and zero volume to a $2$-dimensional object. On the other hand, the Minkowski dimension $\dim_B E$ of a bounded set $E\subset \R^n$ is intuitively the exponent $s>0$ for which $N(E,r)$, i.e. the minimum number of sets of diameter at most $r$ needed to cover $E$, is approximately $r^{-s}$, for all small $r>0$. The Assouad dimension $\dim_A E$ of $E$ is defined by utilizing the idea of the Minkowski dimension after ``zooming in" (or ``zooming out") at all points of $E$. 
For $\theta\in (0,1)$, the $\theta$-Assouad spectrum $\dim_A^\theta E$ of $E$ is defined similarly to the Assouad dimension, with a restriction on the allowed zoom scales that depends on $\theta$.
We refer to Section \ref{sec:background} for rigorous definitions.

In the Euclidean setting, one feature of $\R^n$ that has facilitated the study of dimensions, especially within applied areas, has been the simplification of the aforementioned definitions using dyadic cubes. In particular, the Hausdorff dimension of $E$ is equal to the critical exponent $s\geq 0$ for which the $s$-dimensional cubic measure $M^s(E)$ changes from being infinite to being equal to $0$. The cubic measure $M^s(E)$ is defined using dyadic cubes, unlike the Hausdorff measure which uses arbitrary sets. Similarly, for the definition of the Minkowski and Assouad dimension, if $E$ is contained in a ball $B(x,R)$ and its circumscribed cube $Q(x,R)$, one can replace the number $N(E,r)$ by the smallest number of dyadic sub-cubes of $Q(x,R)$ of level $m$ needed to cover $E$, denoted by $D(E,m)$, and ask that $D(E,m)\simeq 2^{m s}$, for all large $m\in \N$. 
Such simplifications have been an invaluable tool in many applications of fractal geometry. 
In particular, in the area of dimension distortion under mapping classes, the simplified definitions have been used extensively (for instance \cite{GehringVais, Kaufman, SobSubcriticalDimDist1}), including in previous work of the author in \cite{Chron_Sob, HolomSpecChron}, and jointly by Tyson and the author in \cite{OurQCspec, HolderSpecOur}.

Given the purely metric nature of the Hausdorff dimension, Minkowski dimension, Assouad dimension, and Assouad spectrum, it would be helpful to have similar reductions of the original definitions in the setting of more abstract spaces. In fact, the origin and popularity of the Assouad dimension within the analysis on metric spaces community \cite{Assouad83, hei:lectures}, and the recent embeddability theorem of Troscheit and the author \cite{ChronSascha} emphasize this need for simplified definitions in higher generality. In arbitrary metric spaces, however, there are various generalizations of dyadic cube constructions. One of the first manuscripts addressing this idea was by David  \cite{CubesGuyC1}, while one of the first explicit constructions of a system of dyadic cubes is due to Christ \cite{ChristCubes}. See also \cite{MoreCubes1},  \cite{MoreCubes2}, \cite{MoreCubes3}, which is not an exhaustive list. 

The main result of this note is to express the aforementioned dimensions equivalently in the metric spaces setting using the dyadic cube systems constructed by Hyt\"onen-Kairema in \cite{Hyt:dyadic}. Specifically, we show that the Hausdorff measure $H^s(E)$ and minimal covering number $N(E,r)$ 
can be replaced by the cubic measure $M^s(E)$ and dyadic cube covering number $D(E,m)$, respectively, as defined through the Hyt\"onen-Kairema dyadic cube systems (see Section \ref{sec:background} for  definitions).

\begin{theorem}\label{thm:main}
    Let $(X,d)$ be a doubling metric space. There is a finite collection of $\delta$-dyadic cube systems $\mathcal{S}=\{\Dcal_1, \dots, \Dcal_K\}$ of $X$, such that for every non-empty $E\subset X$, the following hold:
    \begin{enumerate}[label=(\roman*)]
        \item For any $t\in \{1,\dots, K\}$, we have
        $$
        \dim_H E= \inf \{ s \geq 0 : M^s(E)=0 \}=\sup \{ s \geq 0 : M^s(E)=\infty \},
        $$ where $M^s(E)$ is the cubic measure of $E$ with respect to $\Dcal_t$. 

        \item If there are $x\in X$, $R>0$ with $E\subset B(x,R)$, and $m_E\in \N$ with $\delta^{m_E}\leq |E|$, then 
        $$
        \dim_B E = \inf\{ \alpha\geq 0: \exists \, C>0\,\, \text{s.t.}\,\, D(E,m)\leq C \delta^{-m\alpha}\,\, \text{for all}\,\, m\geq m_E\},
        $$ where $D(E,m)$ is with respect to the circumscribed cube $Q(x,R)$  of the ball $B(x,R)$, which lies in $\Dcal_t$ for some $t\in \{1,\dots, K\}$.
\newpage
        \item There is a constant $\Ctil>0$, independent of $E$, such that for any $\theta\in (0,1)$, the Assouad spectrum $\dim_A^\theta E$ is equal to the infimum of all $\alpha>0$ for which there is $C>0$ with
        $$
        D(E\cap B(x,R),m) \le C \delta^{-m\alpha},
        $$ for all $x\in E$, $R>0, m\in \N$ with $0<\Ctil \delta^{m}R\le R^{1/\theta} < R < 1$, and $D(E\cap B(x,R),m)$ with respect to the circumscribed cube $Q(x,R)$.
        

        \item 
        %
        There is a constant $\Ctil>0$, independent of $E$, such that the Assouad dimension $\dim_A E$ is equal to the infimum of all $\alpha>0$ for which there is $C>0$ with
        $$
        D(E\cap B(x,R),m) \le C \delta^{-m\alpha},
        $$ for all $x\in E$, $R>0$,  $m\in \N$ with $\Ctil \delta^{m}R\leq R$, and $D(E\cap B(x,R),m)$ with respect to the circumscribed cube $Q(x,R)$.
    \end{enumerate}
\end{theorem}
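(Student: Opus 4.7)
The backbone of all four parts is the compatibility of Hyt\"onen--Kairema cubes with balls: there is a finite collection $\mathcal{S}=\{\Dcal_1,\dots,\Dcal_K\}$ and constants $c_0,C_0>0$ such that (a) every cube $Q\in\Dcal_t$ of generation $k$ satisfies $B(z_Q,c_0\delta^k)\subset Q\subset B(z_Q,C_0\delta^k)$ for some center $z_Q$, and (b) for every ball $B(x,R)\subset X$ there is at least one $t\in\{1,\dots,K\}$ and a cube $Q(x,R)\in\Dcal_t$ with $B(x,R)\subset Q(x,R)$ and $\diam Q(x,R)\le \tilde C R$. Combined with the doubling property, property (b) yields the crucial sandwich: for every $r>0$ and every $r$-separated set $F\subset X$ there is $m\in\N$ with $\delta^m\simeq r$ such that $F$ is covered by a controlled (doubling-dependent) number of level-$m$ cubes, and conversely each level-$m$ cube can be covered by a controlled number of balls of radius $\delta^m$. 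First I would isolate these two covering inequalities as a pair of quantitative lemmas; the constant $\Ctil$ in (iii) and (iv) is then essentially $C_0/c_0$, forcing $\delta^m R\le R^{1/\theta}$ to correspond to the usual Assouad-spectrum scale constraint $r\le R^{1/\theta}$.

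For (i) one direction, $H^s(E)\le M^s(E)$, is immediate since every dyadic cube is an admissible covering set. For the reverse inequality, take any countable cover $\{U_i\}$ of $E$ with $\diam U_i\le r$. For each $i$ pick a level $m_i$ with $\delta^{m_i}\simeq \diam U_i$ and cover $U_i$ by at most $N$ cubes of generation $m_i$ (from a fixed $\Dcal_t$), where $N$ depends only on the doubling constant. The resulting cube cover has $s$-mass at most $N\cdot(\diam)^s$-sum up to a fixed constant, giving $M^s(E)\lesssim H^s(E)$ and hence the equality of critical exponents. Since the argument is symmetric in $t$, any $\Dcal_t\in\mathcal{S}$ works.

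For (ii), the containment $E\subset B(x,R)\subset Q(x,R)$ guarantees that $D(E,m)$ is well-defined for every $m$. The same ball/cube sandwich gives $D(E,m)\le C\, N(E,\delta^m)$ and $N(E,C_0\delta^m)\le C'\, D(E,m)$; taking $r=\delta^m$ and running the standard argument that $\dim_B E$ is the critical polynomial growth exponent of $N(E,r)$ as $r\to 0$ converts the two asymptotic growth conditions into each other, and the restriction $m\ge m_E$ merely encodes $r<\diam E$. Parts (iii) and (iv) are obtained by applying the same sandwich to the localized covering numbers $N(E\cap B(x,R),r)$ used in the definitions of $\dim_A^\theta E$ and $\dim_A E$; the constant $\Ctil$ ensures both that $Q(x,R)$ exists inside one of the $\Dcal_t$ and that, once $\Ctil\delta^m R\le R$ (resp.\ $\le R^{1/\theta}$), one can translate between the $r$-balls and level-$m$ cubes without losing the polynomial exponent $\alpha$. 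The equivalences of dimensions then follow by taking suprema over admissible scales.

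The main obstacle is bookkeeping of constants across the four parts: the Assouad spectrum in (iii) demands a \emph{single} constant $\Ctil$ that simultaneously (1) guarantees a circumscribed cube in some $\Dcal_t$ for every ball, (2) makes the doubling-derived comparison between $N$ and $D$ uniform in $x$, $R$, and $\theta$, and (3) reparametrizes the scale window $r\le R^{1/\theta}$ as $\delta^m R\le R^{1/\theta}$ without distortion of the critical exponent. All three requirements can be met simultaneously by taking $\Ctil=\max\{C_0/c_0, \tilde C\}$ from the Hyt\"onen--Kairema construction, but verifying that the exponent $\alpha$ is preserved across this reparametrization (rather than shifted by an additive error that survives the infimum) is the delicate point and will be handled by carrying the constants explicitly through the chain of inequalities before taking logarithms.
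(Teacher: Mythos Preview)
Your proposal is correct and follows essentially the same approach as the paper: the trivial direction $H^s\le M^s$ plus a doubling-based replacement of each $U_i$ by boundedly many cubes for (i), the two-sided sandwich $N(E,\delta^m)\simeq D(E,m)$ for (ii)--(iv), and the choice of $\Ctil$ so that level-$m$ cubes in $W_m(Q(x,R))$ have diameter at most $\Ctil\delta^m R$. The paper's constant is $\Ctil=12C_0C_\delta/c_0$ rather than your $\max\{C_0/c_0,\tilde C\}$, and part (ii) is dispatched there by citing an earlier paper of the author rather than arguing directly, but the underlying mechanism is identical to what you outline.
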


This paper is organized as follows. Section \ref{sec:background} reviews the background on the relevant dimensions and introduces the quantities $M^s(E)$, $D(E,m)$ for $E\subset X$, using the dyadic cube systems of Hyt\"onen-Kairema for the  metric space $X$.  In Section \ref{sec: Proofs} we prove our main result, Theorem \ref{thm:main}, by employing properties of the dyadic cubes of Hyt\"onen-Kairema that resemble those of the Eucledian systems. Section \ref{sec:Final Remarks} contains remarks and future directions motivated by this work.

\section*{Acknowledgements} I wish to thank Alex Rutar for the interesting conversations regarding this note. I am also grateful to the anonymous referee, whose comments improved the exposition of the note.

\section{Background}\label{sec:background}

\subsection{Dimension notions.}
Let $(X,d)$ be a metric space. We use the Polish notation $|x-y|:=d(x,y)$ for all $x,y \in X$ and denote the open ball centered at $x$ of radius $r>0$ by $B(x,r):= \{z\in X: \,\, |x-z|<r \}.
$ Given a ball $B=B(x,r) \subset X$, we  denote by $\lambda B$ the ball $B(x,\lambda r)$, for $\lambda>0$. Given a subset $U$ of $X$, we denote by $|U|$ the diameter of $U$. While for every $x\in X$, $r>0$, we have $|B(x,r)|\leq 2r$, the radius of a ball in a metric space is not uniquely determined. However, for the rest of this manuscript we make the convention that for every ball $B$ that contains at least two distinct points, all radii of $B$ that we consider do not exceed $2|B|$. Thus, the notation $B(x,r)$ always implies that
\begin{equation}\label{eq: convention balls}
    2^{-1}|B(x,r)|\leq r\leq 2|B(x,r)|.
\end{equation}

We recall the dimension notions that we are focusing on in this note (see \cite{FalcBook, FraserBook} for more details). For $s>0$, $r>0$, and a subset $E$ of $X$, the \textit{$s$-dimensional $r$-approximate Hausdorff measure} of $E$ is defined as
$$
H_r^s(E)= \inf 
\left\{ \sum_i |U_i|^s : \{ U_i\}\text{\ is a countable cover of}
\,\,E \,\,\text{with} \,\, |U_i|\leq r  \right\}.
$$ The \textit{$s$-dimensional Hausdorff (outer) measure} of $E$ is the limit
$$
H^s(E)=\lim_{r \rightarrow 0} H_r^s(E),
$$ and the Hausdorff dimension of $E$ is defined as
$$
\dim_H E= \inf \{ s \geq 0 : H^s(E)=0 \}=\sup \{ s \geq 0 : H^s(E)=\infty \}.
$$

While the Hausdorff dimension stems from the idea of a dimensional threshold between null and infinite measure for the given set, other dimension notions rely on the behavior of the cardinality of coverings by small sets. Let $E$ be a bounded subset of $X$. For $r>0$, denote by $N(E,r)$ the smallest number of sets of diameter at most $r$ needed to cover $E$. The {\it (upper) Minkowski dimension} of $E$ is defined as
$$
\ovdimB E = \limsup_{r\to 0} \frac{\log N(E,r)}{\log(1/r)}.
$$
This notion is also known as \textit{upper box-counting dimension}, which justifies the notation with the subscript `B' typically used in the literature (see \cite{FalcBook}, \cite{FraserBook}). We drop the adjective `upper' and the bar notation throughout this paper as we will make no reference to the lower Minkowski dimension. For any fixed $r_0\leq |E|$, an equivalent formulation is
$$
\dim_B E = \inf \{\alpha>0 \,:\, \exists\,C>0\mbox{ s.t. } N(E,r) \le C r^{-\alpha} \mbox{ for all $0<r\le r_0$} \}.
$$ This formulation reveals the relation of the Minkowski dimension to other notions.

For an arbitrary (not necessarily bounded) set $E \subset X$, the {\it Assouad dimension} of $E$ is
$$
\dim_A E = \inf \left\{\alpha>0 \,:\, {\exists\,C>0\mbox{ s.t. } N(E\cap B(x,R),r) \le C (R/r)^{\alpha} \atop \mbox{ for all $0<r\le R$ and all $x \in E$}} \right\}.
$$
The idea behind the Assouad dimension is that we are allowed to zoom-in and zoom-out at all points of the set, trying to find the largest dimension possible.
This notion first appeared (under a different name) in a 1983 paper of Assouad on metric embedding problems \cite{Assouad83}. However, it has recently gained a lot of popularity in the fractal geometry and dynamics community, due to its various applications (see \cite{FraserBook} for a modern exposition).

It is easy to see that for a fixed bounded set $E\subset X$, the relation between the aforementioned dimensions is
$$
\dim_H E\leq \dim_B E\leq \dim_A E,
$$ with the inequalities being strict in various cases (see for instance \cite{ChronConcentric}). The potential ``gap" between the Minkowski and the Assouad dimension received increased attention by Fraser and Yu \cite{fy:assouad-spectrum}, who defined a collection of dimension notions that reside in that gap. For $0<\theta<1$ and a set $E \subset X$, define
$$
\dim_{A}^\theta E = \inf \left\{\alpha>0 \,:\, {\exists\,C>0\mbox{ s.t. } N(B(x,R) \cap E,r) \le C (R/r)^{\alpha} \atop \mbox{ for all $0<r\le R^{1/\theta}< R< 1$ and all $x \in E$}} \right\}.
$$
Thus, $\dim_{A}^\theta E$ is defined by the same process as $\dim_A E$, but with the restriction that the two scales $r$ and $R$ involved in the definition of the latter are related by the inequality $r^\theta\leq R$. The function $\theta\mapsto \dim_A^\theta E$ is called the {\it (regularized)\footnote{This definition can also be found as \textit{upper} Assouad spectrum in the literature, while the original spectrum in \cite{fy:assouad-spectrum} was defined with  $r=R^{1/\theta}$. Due to the relation $r\leq R^{1/\theta}$ ensuring that the spectrum is monotone in $\theta$ for a fixed $E$, thus resulting in a more regular function of $\theta$, the term ``regularized" was suggested by Tyson and the author. See also the discussion in \cite{OurQCspec}.} Assouad spectrum} of $E$. We often abuse the terminology and refer to a value $\dim_A^\theta E$ for some $\theta\in (0,1)$ as the Assouad spectrum of $E$. 

We emphasize that the Assouad spectrum naturally (and continuously \cite{FraserBook}) interpolates between the Minkowski and Assouad dimensions. For a fixed $E$, the limit $\lim_{\theta\to 0^+}\dim_{A}^\theta E$ exists and equals $\dim_B E$. Moreover, $\lim_{\theta\to 1^-}\dim_{A}^\theta E$ coincides with the so-called {\it quasi-Assouad dimension} of $E$, denoted by $\dim_{qA} E$. While there are instances where the quasi-Assouad dimension differs from the Assouad dimension, in many natural situations they coincide (see \cite{TroschQuasiAssouad3,TroschQuasiAssouad2,TroschQuasiAssouad1}). 
Furthermore, if $E$ is bounded, for all $\theta\in (0,1)$ we have 
$$
\dim_H E\leq \dim_B E\leq \dim_{A}^\theta E \le \dim_{qA}E \le \dim_A E,
$$ with the first two inequalities being strict in various cases (see for instance \cite[Theorem 3.4.7]{FraserBook} and \cite{ChronConcentric}).

\subsection{Metric dyadic cubes.}\label{subsec:MS}

In the context of the typical Euclidean metric spaces $\R^n$, it is quite elementary to equivalently express the dimensions $\dim_H E$ and $\dim_B E$ using dyadic cubes instead of arbitrary sets, by modifying the quantities $H^s_r(E)$ and $N(E,r)$. This idea was also extended  for the Assouad dimension and the Assouad spectrum (see \cite[Proposition 2.5]{OurQCspec} for a proof). In this subsection we present and define the appropriate dyadic cube notion in the context of metric spaces, in order to prove similar expressions for the aforementioned dimensions.

We say that $(X,d)$ is a \textit{doubling metric space} if there is a \textit{doubling constant} $C_d\geq 1$ such that for all $x\in X$ and $r>0$, the smallest number of balls of radius $r$ needed to cover $B(x,2r)$ is at most $C_d$.  The property of a metric space $X$ being doubling is in fact equivalent to $\dim_A X<\infty$ (see for instance \cite{hei:lectures}).
Note that the doubling property implies that $X$ is separable.

As noted in the Introduction, the dyadic cube systems that we employ are those constructed by Hyt\"onen and Kairema.
\begin{theoremA}[Hyt\"onen, Kairema \cite{Hyt:dyadic}]\label{thm:Dydadic}
	Suppose $(X,d)$ is a doubling metric space. Let $0<c_0\leq C_0<\infty$ and $\delta\in (0,1)$ with $12 C_0 \delta \leq c_0$. For any $k\in \N$ and collection of points $\{ z_i^k \}_{i\in I_k}$ with
	
	\begin{equation}\label{eq:centers_away}
		|z_i^k-z_j^k|\geq c_0 \de^k, \,\,\,\, \text{for} \,\, i\neq j,
	\end{equation}
	and
	\begin{equation}\label{eq:points_close_centers}
	\min_i |z_i^k-x|< C_0 \de^k, \,\,\,\, \text{for all} \,\, x\in X,
	\end{equation}
	where $I_k$ is an index set, we can construct a collection of sets $\{ Q_i^k \}_{i\in I_k}$ such that
	\begin{itemize}
		\item[(i)] if $l \geq k$ then for any $i\in I_k$, $j\in I_l$ either $Q_j^l\subset Q_i^k$ or $Q_j^l \cap Q_i^k=\emptyset$,
		\vspace{0.1cm}
		\item[(ii)] $X$ is equal to the disjoint union $\bigcup\limits_{i\in I_k} Q_i^k$, for every $k\in\N$,
		\vspace{0.1cm}
		\item [(iii)] $B(z_i^k, c_0 \de^k /3) \subset Q_i^k \subset B(z_i^k, 2C_0 \de^k)=:B(Q_i^k)$ for every $k\in \N$,
		\vspace{0.1cm}
		\item [(iv)] if $l\geq k$ and $Q_j^l\subset Q_i^k$, then $B(Q_j^l)\subset B(Q_i^k)$.
	\end{itemize}
	
	For $k\in \N$, we call the sets $Q_i^k$ from the construction of Theorem \ref{thm:Dydadic} ($\de$-)\textit{dyadic cubes} of level $k$ of $X$, and the collection $\{Q_i^k:k\in \N, i\in I_k\}$ a \textit{dyadic cube system}.
	
\end{theoremA}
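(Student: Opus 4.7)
The plan is to (a) construct the finite family $\mathcal{S} = \{\Dcal_1, \ldots, \Dcal_K\}$ so that every ball $B(x,R) \subset X$ is \emph{circumscribed} by a cube of comparable diameter in at least one $\Dcal_t$, and (b) reduce each of the four dyadic dimension formulas to the classical one via a two-sided comparison between arbitrary covers and dyadic covers, using the sandwich property (iii) of Theorem A together with doubling. For (a) I would mimic the adjacent-lattice idea of Hyt\"onen-Kairema: at each level $k \in \N$, pick $K$ translated $c_0 \delta^k$-separated nets $\{z_i^{k,t}\}_i$, $t = 1, \ldots, K$, with $K$ depending only on the doubling constant of $X$, chosen so that for every $x \in X$ some net has a point very close to $x$ at the current scale. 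Feeding each net into Theorem A yields $\Dcal_1, \ldots, \Dcal_K$ together with the key \emph{circumscription property}: for every ball $B(x,R) \subset X$ there exist $t \in \{1, \ldots, K\}$ and $Q \in \Dcal_t$ with $B(x,R) \subset Q \subset B(x, \Ctil\, R)$; this $Q$ will serve as the circumscribed cube $Q(x,R)$ appearing in items (ii)--(iv).

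For parts (i) and (ii), fix any $\Dcal_t$. By property (iii) each level-$k$ cube has diameter at most $4 C_0 \delta^k$ and contains a ball of radius $c_0 \delta^k / 3$; together with doubling this yields two-sided comparisons of the form $M^s(E) \asymp H^s(E)$ at matched scales and $D(E,m) \asymp N(E, \delta^m)$, with multiplicative constants depending only on $c_0, C_0, \delta$, and the doubling constant. The easy direction uses that each dyadic cube is itself a valid cover set; the reverse uses that any set of diameter $r$ lies in a ball of radius $r$ and thus, by (iii) plus doubling, meets only a uniformly bounded number of level-$\lceil \log_\delta r \rceil$ cubes. Taking critical exponents in $s$ gives (i), while the limit superior of $\log D(E,m) / (m \log(1/\delta))$ gives (ii).

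For parts (iii) and (iv), given $x \in E$ and $R > 0$, the circumscription property produces $Q(x,R) \in \Dcal_t$ containing $B(x,R)$. By property (i) of Theorem A, the sub-cubes of $Q(x,R)$ at level $m + \lfloor \log_\delta R \rfloor$ remain in $\Dcal_t$ and have diameters comparable to $\delta^m R$. The same local two-sided comparison as above, now restricted to $B(x,R)$, gives
$$
c^{-1}\, N(E \cap B(x,R), c_1 \delta^m R) \le D(E \cap B(x,R), m) \le c\, N(E \cap B(x,R), \delta^m R),
$$
with $c, c_1$ depending only on doubling data. Setting $r := \delta^m R$ converts a dyadic bound $D \le C \delta^{-m\alpha}$ into a classical Assouad-type bound $N(E \cap B(x,R), r) \le C'(R/r)^\alpha$, matching the definitions of $\dim_A E$ and $\dim_A^\theta E$. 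The spectrum constraint $r \le R^{1/\theta}$ translates into the arithmetic condition on $m$ stated in item (iii), with $\Ctil$ absorbing the circumscription constant and the factors $c_1$.

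The technical heart of the argument is the construction of $\mathcal{S}$ with the uniform circumscription property, i.e., that finitely many adjacent Hyt\"onen-Kairema systems suffice in an arbitrary doubling metric space, with $K$ controlled only by the doubling constant and by $c_0, C_0, \delta$. Once this geometric lemma is in place, items (i)--(iv) are translations of the familiar Euclidean comparison between ball covers and dyadic cube covers through property (iii) of Theorem A, and careful bookkeeping of constants isolates the single $\Ctil$ that appears in (iii)--(iv).
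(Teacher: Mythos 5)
Your proposal does not prove the statement at hand. The statement is Theorem~A itself, i.e.\ the Hyt\"onen--Kairema construction: given a doubling metric space, constants $c_0, C_0, \delta$ with $12C_0\delta \le c_0$, and $c_0\delta^k$-separated, $C_0\delta^k$-dense point families $\{z_i^k\}$, one must \emph{construct} the sets $Q_i^k$ and verify the four properties — nestedness (i), that each level partitions $X$ (ii), the two-sided ball sandwich $B(z_i^k, c_0\delta^k/3)\subset Q_i^k\subset B(z_i^k,2C_0\delta^k)$ (iii), and monotonicity of the enclosing balls (iv). Your text never constructs any $Q_i^k$ and never verifies any of these properties; instead it explicitly invokes Theorem~A as a black box (``feeding each net into Theorem~A yields $\Dcal_1,\dots,\Dcal_K$\dots'') and goes on to sketch the comparison arguments for the Hausdorff, Minkowski, and Assouad-type dimensions. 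What you have outlined is a proof plan for Theorem~1.1 of the paper (together with the adjacent-systems result, Theorem~4.1 of Hyt\"onen--Kairema, which supplies the circumscribed cubes), not a proof of Theorem~A; as an argument for Theorem~A it is circular.

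A genuine proof of Theorem~A requires the actual construction: one introduces a parent--child relation between centers of consecutive levels (each $z_j^{k+1}$ is assigned a parent $z_i^k$ with $|z_j^{k+1}-z_i^k|<C_0\delta^k$, with ties broken consistently, and $z_i^{k+1}$ chosen as its own child when the centers are nested), defines $Q_i^k$ as a suitable set built from all descendants of $z_i^k$ (with a measurable-selection/closure--interior adjustment to make the levels exact partitions), and then checks (iii) by summing the geometric series of displacements $\sum_{j\ge k} C_0\delta^j \le C_0\delta^k/(1-\delta)$, where the hypothesis $12C_0\delta\le c_0$ is exactly what guarantees that descendants of $z_i^k$ stay within $2C_0\delta^k$ of $z_i^k$ while the inner ball $B(z_i^k,c_0\delta^k/3)$ cannot be reached by descendants of any other center of level $k$. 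None of this bookkeeping appears in your proposal, so as it stands there is no proof of the quoted statement. (For the record, the paper itself does not reprove Theorem~A either; it cites it from Hyt\"onen--Kairema, so if your intent was to treat it as known and prove the main theorem, you were answering a different question than the one posed.)
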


Fix a doubling metric space $(X,d)$ with doubling constant $C_d$, and constants $\delta<1/100$, $c_0$ and $C_0$, as in Theorem \ref{thm:Dydadic}, for the rest of the paper. Moreover, for every $k\in \N$ we fix a collection of points $\{ z_i^k \}_{i\in I_k}$ satisfying \eqref{eq:centers_away}, \eqref{eq:points_close_centers}. 
To see why such a collection of points exists, consider the covering $\{ B(z,c_0 \delta^k): z\in X \}$ of $X$ and apply the $5B$-covering lemma (see \cite{hei:lectures}). By separability of $X$ and by choosing $c_0$ and $C_0$ so that $5c_0 \delta^k<C_0 \delta^k$, the existence of centers $\{ z_i^k \}_{i\in I_k}$ is ensured. 

By \cite[Theorem 4.1]{Hyt:dyadic}, we can fix finitely many dyadic cube systems of $X$ satisfying Theorem A, say $\mathcal{S}=\{\mathcal{D}_1, \dots, \mathcal{D}_K\}$ with $K=K(C_d,\delta)\in \N$, so that for every ball $B(x,R)$, $x\in X$, $R>0$, which contains at least two points, there are a constant $C_\delta>0$ and a $\de$-dyadic cube $Q(x,R)\in \mathcal{D}_t$, for some $t=t_{x,R}\in \{1,\dots, K\}$, so that $B(x,R)\subset Q(x,R)$ and
\begin{equation}\label{eq: cube corresp to ball}
    C_\de^{-1}R\leq |Q(x,R)|\leq C_\de R.
\end{equation} We call $Q(x,R)$ the \textit{circumscribed cube} of $B(x,R)$. This is the particular property of the dyadic systems from \cite{Hyt:dyadic} that ensures their compatibility with  Assouad-like dimensions. 

Henceforth, if two quantities $A,B>0$ are related by $A\leq C B$, for some uniform constant $C>0$ that depends only on intrinsic constants of $X$ and the systems in $\mathcal{S}$, such as $C_d, \de, c_0, C_0$, we write $A\lesssim B$ and call $C$ the \textit{comparability constant} of the relation. Similarly, if there is a uniform constant $C>0$ such that $A\geq C B$, we write $A\gtrsim B$. Lastly, if there are uniform constants $C_1,C_2>0$ such that $C_1 B\leq A\leq C_2 B$, we write $A\simeq B$. Note that the relation $A\simeq B$ is  equivalent to $C^{-1} B\leq A\leq C B$, for some uniform $C>0$, which we also call the comparability constant of the relation.

We have introduced all the necessary notions to rigorously define the cubic measure and dyadic cube covering number in the setting of metric spaces, which are stated in Theorem \ref{thm:main}. We further assume without loss of generality that all balls considered in what follows contain at least two points, even if not explicitly stated, since all covering number conditions in the definitions of the Assouad dimension and spectrum are trivial otherwise.
Given $x\in X$, $R>0$, $m\in \N$, denote by $W_m(Q(x,R))\subset \Dcal_t$ the dyadic cubes of level $L_{R}+m$ that are contained in $Q(x,R)\in \Dcal_t$, where $L_{R}$ is the level of the circumscribed cube $Q(x,R)$ of the ball $B(x,R)$. Note that by the convention $|B(x,R)|\geq R/2$, Theorem \ref{thm:Dydadic} (iii) and \eqref{eq: cube corresp to ball}, the level $L_R$ of $Q(x,R)$ does not depend on the point $x$. Given a set $E\subset X$, we denote by $D(E\cap B(x,R),m)$ the smallest number of cubes in $W_m(Q(x,R))$ needed to cover $E\cap B(x,R)$. If $E\subset B(x,R)$, we simplify the notation and write $D(E,m)$.

Given $\Dcal\in \mathcal{S}$, $s>0$, $r>0$ and $E\subset X$, the \textit{$s$-dimensional $r$-approximate cubic measure} of $E$ with respect to $\Dcal$ is defined as
\begin{equation}\label{eq: def of Mrs}
    M_r^s(E)=\inf \left\{ \sum_i |Q_i|^s : \parbox{65mm}{\raggedright$\{ Q_i\}\subset \Dcal$ is a cover of
$E$ by dyadic cubes of level $m$ with $4C_0 \delta^m\leq r$}  \right\}.
\end{equation}
The \textit{$s$-dimensional cubic measure} of $E$ with respect to $\Dcal$ is then defined as
$$
M^s(E)=\lim_{r\rightarrow0} M_r^s(E).
$$ The fact that $M_r^s(E)$ is decreasing in $r$ ensures that the above limit exists.

\section{Proof of equivalent expressions}\label{sec: Proofs}

\subsection{Hausdorff dimension and cubic measure}\label{subsec: dimH}
In this subsection we show that the threshold dimension of the cubic measure $M^s(E)$ is indeed equal to the Hausdorff dimension $\dim_H E$.

\begin{proof}[Proof of Theorem \ref{thm:main} (i)]
    Let  $s>0$, $r\in (0,1)$, and $E\subset X$. Fix a dyadic system $\Dcal\in \mathcal{S}$ of $X$, and consider the cubic measure $M_r^s(E)$ with respect to $\Dcal$. We first show that $M_r^s(E)\simeq H_r^s(E)$, with comparability constant independent of $r$. 

    One direction is trivial, namely, $H_r^s(E)\leq M_r^s(E)$, due to the cubes used in the definition of $M_r^s(E)$ having diameter at most $r$, by Theorem \ref{thm:Dydadic} (iii). For the other direction, let $\{U_i\}$ be a cover of $E$ with $|U_i|\leq r$. For every $i$, fix some $x_i\in U_i$ and set $m_i$ to be the unique integer such that
    \begin{equation}\label{eq: dimH Ui mi compare}
        4C_0 \delta^{m_i+1}< |U_i|\leq 4C_0\delta^{m_i}.
    \end{equation}
    Note that we assume that $|U_i|>0$, otherwise it would not contribute to the sum. Due to \eqref{eq:centers_away} and the one-to-one correspondence of cubes of level $m_i$ and centers $z_\ell^{m_i}$, along with $\dim_A X<\infty$ imply that there can be at most $C'>0$ cubes of level $m_i$ intersecting the ball  $B(x_i,4C_0\delta^{m_i})$, for some constant $C'$ that depends only on $\delta$ and the doubling constant $C_d$ of $X$. By \eqref{eq:points_close_centers}, these cubes in fact cover the ball $B(x_i,4C_0\delta^{m_i})$.  An additional application of the doubling condition of $X$ yields that every dyadic cube of level $m_i$ intersecting $B(x_i,4C_0\delta^{m_i})$ can be decomposed into at most $C''>0$ cubes of level $m_i+1$, with $C''$ depending only on $\delta$ and $C_d$. Denote this collection of  cubes of level $m_i+1$ by $\{Q_j^{m_i+1}\}_{j}$. Note that by the right-hand side of \eqref{eq: dimH Ui mi compare} we have $U_i\subset B(x_i,4C_0\delta^{m_i})$, for every $i$. Thus, since $\{U_i\}_i$ is a cover of $E$, we have
    $$
    E\subset \bigcup_{i,j}Q_j^{m_i+1},
    $$ with $|Q_j^{m_i+1}|\leq4C_0 \delta^{m_i+1}\leq r$ by Theorem \ref{thm:Dydadic} (iii), \eqref{eq: dimH Ui mi compare}, and choice of $\{U_i\}$. In addition, by \eqref{eq: dimH Ui mi compare} we have
    $$
    \sum_{i,j}|Q_j^{m_i+1}|^s\leq \sum_{i,j}(4C_0 \delta^{m_i+1})^s\leq C''\sum_{i}(4C_0 \delta^{m_i+1})^s\lesssim \sum_i |U_i|^s,
    $$ where the comparability constant does not depend on $r$. Since the collection of cubes $\{Q_j^{m_i+1}\}_{i,j}$ satisfies the conditions for a cube cover of $E$ in \eqref{eq: def of Mrs}, the above inequality implies
    $$
    M_r^s(E)\lesssim \sum_i |U_i|^s.
    $$ But the cover $\{U_i\}$ is arbitrary, which implies by the above that
    $$
    M_r^s(E)\lesssim H_r^s(E)
    $$ as needed. As a result,  it is shown that
    $$
    M_r^s(E)\simeq H_r^s(E),
    $$ with the comparability constant independent of $r$, which allows for $r\rightarrow 0$, proving that $M^s(E)\simeq H^s(E)$. By the definition of $\dim_H E$, this is enough to complete the proof.

\end{proof}

\subsection{Dyadic covering numbers and dimensions}\label{subsec: dimA}

Recall that for a fixed, but arbitrary dyadic cube system of $X$, the author  proved in \cite{Chron_Sob} that using dyadic cubes instead of arbitrary sets indeed yields the Minkowski dimension.
\begin{proof}[Proof of Theorem \ref{thm:main} (ii)]
    Suppose $E\subset X$ is bounded, with $E\subset B(x,R)$, for some $x\in X, R>0$. Let $\Dcal_t\in \mathcal{S}$ be the dyadic cube system in which the circumscribed cube $Q(x,R)$ lies, and fix $m_E\in \N$ so that $\delta^{m_E}\leq |E|$. Thus, for any integer $m\geq m_E$, the covering number $D(E,m)$ is simply the minimum number of cubes of level $L_{R}+m$ in $\Dcal_t$ needed to cover $E$. The desired relation for $\dim_B E$ follows by \cite[Proposition 2.1]{Chron_Sob} (where $D(E,m)$ essentially corresponds to the covering number $N_{L_{R}+m}(E)$ in the notation of Proposition 2.1, for the fixed given dyadic cube system $\Dcal_t$).
\end{proof}

\begin{remarks}~
\begin{enumerate}[label=(\roman*)]
    \item After a closer analysis of the above proof, it is evident that the statement of Theorem \ref{thm:main} (ii) can be slightly strengthened by including potentially more levels of cubes in the expression of $\dim_B E$. In particular, the number $m_E$ can be replaced by the minimal integer so that $\delta^{L_{R}+m_E}\leq |E|$, and the statement still holds.
    \item It should be noted that in the proofs of Theorem \ref{thm:main} (i) and (ii), the fact that the dyadic system $\mathcal{D}$ is one of those lying in $\mathcal{S}$ is not used. Hence, the Hausdorff and Minkowski dimensions can in fact be expressed using \textit{any} $\delta$-dyadic system with the properties listed in Theorem \ref{thm:Dydadic}.
\end{enumerate}
\end{remarks}

We proceed with the proofs regarding the Assouad spectrum and the Assouad dimension.

\begin{proof}[Proof of Theorem \ref{thm:main} (iii)]
    We first pick the desired uniform constant $\Ctil>0$ to be
    \begin{equation}\label{eq: Ctilde dimAtheta}
        \Ctil=\frac{12 C_0 C_\delta}{c_0},
    \end{equation} where $c_0, C_0$ are the constants as in Theorem \ref{thm:Dydadic} and $C_\delta$ is the uniform constant in \eqref{eq: cube corresp to ball}.
    Let $\theta\in (0,1)$, and set
    $$
    A_\theta= \left\{\alpha>0 \,:\, {\exists\,C>0\mbox{ s.t. } N(B(x,R) \cap E,r) \le C (R/r)^{\alpha} \atop \mbox{ for all $0<r\le R^{1/\theta}< R< 1$ and all $x \in E$}} \right\},
    $$ so that $\dim_A^\theta E=\inf A_\theta$ by definition, and
    $$
    B_\theta= \left\{\alpha>0 \,:\, {\exists\,C>0\mbox{ s.t. } D(E\cap B(x,R),m) \le C \delta^{-m\alpha} \atop \mbox{ for all $x\in E, m\in \N, R>0$ with $0<\Ctil \delta^{m}R\le R^{1/\theta} < R < 1$}} \right\},
    $$ for which we need to show that $\dim_A^\theta E=\inf B_\theta$. We do so by proving that in fact $A_\theta=B_\theta$. 
    We also emphasize that even if we temporarily consider balls $B=B(x,R)$ in the definitions of the sets $A_\theta, B_\theta$ against the convention \eqref{eq: convention balls}, i.e.~with $R>2|B(x,R)|$, then due to $N(B(x,R)\cap E,r)=N(B(x,2|B|)\cap E,r)$ the proof still reduces to considering only balls according to the convention.
    
    Let $x\in E$, $R>0$, $m\in \N$ and a cube $Q_i\in W_m(Q(x,R))\in \Dcal$, for some $\Dcal\in \mathcal{S}$ where the circumscribed cube $Q(x,R)$ is contained. By definition of $W_m(Q(x,R))$, the cube $Q_i$ is of level $L_R+m$, where $L_R$ is the level of $Q(x,R)$. Since $|Q(x,R)|\geq 3^{-1}c_0 \delta^{L_R}$ by Theorem \ref{thm:Dydadic} (iii) and \eqref{eq: convention balls}, we have by \eqref{eq: cube corresp to ball} and another application of Theorem \ref{thm:Dydadic} (iii) on $Q_i$ that
    \begin{equation}\label{eq: diamQi at most Ctil}
        |Q_i|\leq 4C_0 \delta^{L_R+m}=4C_0 \delta^m \delta^{L_R}\leq 4C_0 \delta^m 3 c_0^{-1}|Q(x,R)|\leq \Ctil \delta^m R.
    \end{equation} Hence, the cubes used in the definition of $B_\theta$, i.e., cubes in $W_m(Q(x,R))$ for $m\in\N$ such that $\Ctil \delta^m R\leq R^{1/\theta}$, are the appropriate cubes to use in coverings of $E$ for the Assouad spectrum $\dim_A^\theta E$, since the diameter of these cubes are at most $R^{1/\theta}$ by \eqref{eq: diamQi at most Ctil}. This trivially leads to the relation 
    \begin{equation}\label{eq: dimA N less D}
        N(B(x,R)\cap E,\Ctil \delta^m R)\leq D(E\cap B(x,R),m),
    \end{equation} for all $m\in\N$ such that $\Ctil \delta^m R\leq R^{1/\theta}$.

    Let $\alpha\in B_\theta$, and $0<r\leq R^{1/\theta}$. Set $m_r\in \N$ to be the unique integer so that
    $$
    \Ctil \delta^{m_r}R\leq r<\Ctil \delta^{m_r-1}R.
    $$ Since the cubes in $W_{m_r}(Q(x,R))$ are of diameter at most $\Ctil\delta^{m_r}R\leq r\leq  R^{1/\theta}$,  we have by \eqref{eq: dimA N less D} that
    $$
    N(B(x,R)\cap E, r)\leq N(B(x,R)\cap E,\Ctil \delta^{m_r} R)\leq D(E\cap B(x,R),m_r).
    $$ In addition, by $\alpha\in B_\theta$ and by choice of $m_r$, the above implies
    $$
    N(B(x,R)\cap E, r) \leq C\delta^{-m_r \alpha}\leq (\Ctil \delta^{-1})^\alpha \left( \frac{R}{r} \right)^\alpha.
    $$ Since $x\in E$ and the scales $r, R$ are arbitrary, we have $\alpha\in A_\theta$, which proves the inclusion $B_\theta \subset A_\theta$.

    For the other inclusion we need to show a comparability relation similar to \eqref{eq: dimA N less D} with opposite direction. Let $\{U_i\}_i$ be a cover of $E$ with $|U_i|\leq \Ctil \delta^m R$.  Recall by \eqref{eq: cube corresp to ball} that $R\simeq \delta^{L_R}$, which implies $|U_i|\lesssim \delta^{L_R+m}$. By a similar argument to that in the proof of Theorem \ref{thm:main} (i), it can be shown that there are at most $M_0$ cubes of level $L_R+m$ needed to cover $U_i$, for some uniform constant $M_0$ that does not depend on $i$, $m$, $R$.
    %
    %
    This establishes the relation
    \begin{equation}\label{eq: dimA D lesssim N}
        D(E\cap B(x,R),m)\leq M_0 N(B(x,R)\cap E,\Ctil \delta^m R),
    \end{equation} for all $m\in\N$ such that $\Ctil \delta^m R\leq R^{1/\theta}$.
    
    Let $\alpha\in A_\theta$, and $m\in\N$ with $\Ctil \delta^m R\leq R^{1/\theta}$. For $r=\Ctil \delta^m R\leq R^{1/\theta}$, by \eqref{eq: dimA D lesssim N} and definition of $A_\theta$ we have 
    $$
    D(E\cap B(x,R),m)\leq M_0 N(B(x,R)\cap E,r)\leq M_0 C \left( \frac{R}{\Ctil \delta^m R}\right)^\alpha= M_0 C \Ctil^{-\alpha} \delta^{-m\alpha},
    $$ which implies that $\alpha\in B_\theta$. Therefore, $A_\theta=B_\theta$, and the proof is complete.

    (iv) While the statement does not follow by (iii), due to the limit of $\dim_A^\theta E$ as $\theta\rightarrow 1^-$ being equal to the quasi-Assouad dimension $\dim_{qA} E\leq \dim_A E$, the proof is nonetheless identical to that of (iii), by replacing $\theta$ with the number $1$ in all arguments.
\end{proof}

\section{Final Remarks}\label{sec:Final Remarks}

The construction of dyadic cube systems of Hyt\"onen-Kairema in \cite{Hyt:dyadic} is actually given for quasimetric spaces. As a result, Theorem \ref{thm:main} is also true if $X$ is a quasimetric doubling space. The proofs are almost identical, with the only difference being the dependence of a few of the uniform constants on the quasimetric constant of the space. Another generalization of Theorem \ref{thm:main} would be to state it for all $\delta\in (0,1)$ small enough that satisfy the relation $12C_0 \delta\leq c_0$ in Theorem \ref{thm:Dydadic}, which follows with identical arguments to those in Section \ref{sec: Proofs}.

While the  cube systems defined in Theorem \ref{thm:Dydadic} have been used extensively in various areas \cite{HytAppl1, HytAppl2, HytAppl3}, there are many different constructions of dyadic cubes in the metric setting. The main advantage of the Hyt\"onen-Kairema systems is the existence of a circumscribed cube for every ball in $X$. This is a fundamental property that is necessary for the arguments in the proof of Theorem \ref{thm:main} (iii), (iv). This is evident already by the representation of the Assouad dimension and spectrum using dyadic cubes in $\R^n$, as shown in \cite[Proposition 2.5]{OurQCspec}. It would be interesting to establish similar expressions for the dimensions in Theorem \ref{thm:main} using a different construction of dyadic cube systems on metric spaces, for instance the notion defined in \cite{MoreCubes2}. We expect the corresponding proofs for the Hausdorff and Minkowski dimension to be almost identical to those in Section \ref{sec: Proofs}. However, the Assouad dimension and Assouad spectrum need to be treated differently, due to the lack of circumscribed cubes for arbitrary balls.

Moreover, Falconer, Fraser and Kempton introduced in \cite{IntermDimensions} the \textit{intermediate \mbox{dimensions}}, a collection of dimensions that interpolate between the Hausdorff and Minkowski dimension, similarly to how the Assouad spectrum interpolates between the Minkow\-ski and Assouad dimension. It would be an interesting result to represent those dimensions using dyadic cube systems in the metric setting as well. Since a circumscribed cube is only necessary in the arguments in Section \ref{sec: Proofs} due to the local ``zooming in" nature of the Assouad dimension and spectrum, we expect to have similar representations to those in Theorem \ref{thm:main} for the intermediate dimensions, using any of the systems from \cite{Hyt:dyadic, MoreCubes2}


\end{document}